\renewcommand*\subjclass[2][2000]{%
  \def\@subjclass{#2}%
  \@ifundefined{subjclassname@#1}{%
    \ClassWarning{\@classname}{Unknown edition (#1) of Mathematics
      Subject Classification; using '1991'.}%
  }{%
    \@xp\let\@xp\subjclassname\csname subjclassname@#1\endcsname
  }%
}
\newtheorem{theorem}{Theorem}[section]
\newtheorem{lemma}[theorem]{Lemma}
\newtheorem*{lemma*}{Lemma}
\newtheorem{proposition}[theorem]{Proposition}
\newtheorem{corollary}[theorem]{Corollary}
\theoremstyle{definition}
\theoremstyle{remark}
\newtheorem{remark}[theorem]{Remark}
\numberwithin{equation}{section}
\newcommand{\abs}[1]{\lvert#1\rvert}
\DeclareMathOperator{\sign}{sign}
\def\XXint#1#2#3{{\setbox0=\hbox{$#1{#2#3}{\int}$}
\vcenter{\hbox{$#2#3$}}\kern-.5\wd0}}
\def\le{\leqslant}
\def\ge{\geqslant}
\begin{document}

\title[{Optimal estimates for the gradient of harmonic functions} ]{Optimal estimates for the gradient of
harmonic functions in the unit disk} \subjclass{Primary 31A05;
Secondary 42B30 }


\keywords{Harmonic functions, Bloch functions, Hardy spaces}
\author{David Kalaj}
\address{University of Montenegro, Faculty of Natural Sciences and
Mathematics, Cetinjski put b.b. 81000 Podgorica, Montenegro}
\email{davidk@ac.me}

\author{Marijan Markovi\'c}
\address{University of Montenegro, Faculty of Natural Sciences and
Mathematics, Cetinjski put b.b. 81000 Podgorica, Montenegro}
\email{marijanmmarkovic@gmail.com}

\begin{abstract} Let $\mathbf U$ be the unit disk, $p\ge 1$ and let $h^p(\mathbf U)$ be the Hardy space of complex harmonic functions. We find the sharp constants $C_p$ and the sharp functions
$C_p=C_p(z)$ in the inequality
$$|Dw (z)|\leq {C_p}(1-|z|^2)^{-1-1/p}\|w\|_{h^p(\mathbf U)}, w\in h^p(\mathbf U), z\in \mathbf U,$$
in terms of Gauss hypergeometric and Euler functions.  This
generalizes some results of Colonna related to the Bloch constant of
harmonic mappings of the unit disk into itself and improves some
classical inequalities by Macintyre and Rogosinski.
\end{abstract}

\maketitle


\section{Introduction and  statement of the results}
 A harmonic function $w$ defined in the unit ball $\mathbf B^n$
belongs to the harmonic Hardy class $h^p=h^p(\mathbf  B^n)$, $1\leq
p<\infty$ if the following growth condition is satisfied
\begin{equation}\label{ee}\Vert w\Vert_{h^p}:
=\left(\sup_{0<r<1}\int_S |w(r\zeta)|^p d\sigma
(\zeta)\right)^{1/p}<\infty\end{equation} where $S=S^{n-1}$ is the
unit sphere in $R^n$ and $\sigma$ is the unique normalized rotation
invariant Borel measure on $S$. The space $h^{\infty}(\mathbf  B^n)$
contains bounded harmonic functions.

It turns out that if $w\in h^p(B^n)$, then there exists the finite
radial limit
$$\lim_{r\to 1^-}w(r\zeta) = f(\zeta)\ (\text{a.e. on } S
)$$ and the boundary function $f(\zeta)$ belong to the space
$L^p(S)$ of $p$-integrable functions on the sphere.

It is well known that harmonic functions from Hardy class can be
represented as Poisson integral
$$u(x)=\int_SP(x,\zeta)d\mu(\zeta), x \in B^n$$
where $$P(x,\zeta)=\frac{1-|x|^2}{ |x-\zeta|^n}, x\in B^n, \zeta\in
S$$ is Poisson kernel and $\mu$ is complex Borel measure. In the
case $p>1$ this measure is absolutely continuous with respect to
$\sigma$ and $d\mu(\zeta)=f(\zeta)d\sigma$. Moreover $$\Vert
w\Vert_{h^p}=\|\mu\|$$ and for $1<p\le\infty$ we have
\begin{equation}\label{more}\Vert w\Vert_{h^p}=\|\mu\|=\|f\|_p.\end{equation}
where we denote by $\|\mu\|$ total variation of the measure $\mu$.

For previous facts we refer to the book  \cite[Chapter~6]{ABR}.

For $n=2$ we use the classical notation  $\mathbf U$ and  $\mathbf T
$ to denote the unit disk in the complex plane $\mathbf C$ and its
boundary.

Let $L^p(\mathbf R^n)$ be the space of Lebesgue integrable functions
defined in $\mathbf R^n$ with the norm $$\|f\|_p=\left(\int_{\mathbf
R^n}|f(x')|^p dx'\right)^{1/p}.$$ Let $\omega_n$ be the area of the
unit sphere in $\mathbf R^n$. Let in addition $h^p(\mathbf R^n_+)$
be the Hardy space of real harmonic functions in $\mathbf R^n_+$,
which can be represented as the Poisson integral
$$u(x)=\frac{2}{\omega_n}\int_{\mathbf
R^n}\frac{x_n}{|y-x|^n}u(y')dy',$$ with boundary values in
$L^p(\mathbf R^{n-1})$, where $y=(y',0)$, $y'\in \mathbf R^{n-1}$.

In the recent paper \cite{Mar}  Maz'ya and Kresin studied point-wise
estimates of the gradient of real harmonic function $u$ under the
assumptions that the boundary values belong to $L^p$. They obtained
the following result $$|\nabla u(x)|\le C_px_n^{(1-n-p)/p}\|u\|_p$$
where $C_p$ is a constant depending only on $p$ and $n$. For $p=1$,
$p=2$ and $p=\infty$ the constant $C_p$ is concretized and it is
shown the sharpness of the result. After that, in \cite{Mar1}, they
obtained similar results for the unit ball, but for $p=1$ and $p=2$
only. Precisely, they obtain some integral representation for the
sharp constant $K_p(x,l)$ in the inequality
$$|\left<\nabla u(x),l\right>|\leq K_p(x,l)\|u\|_p, \ \ 1\le p\le \infty$$
and the sharp constant $K_p(x)$ in $$|\nabla u(x)|\leq
K_p(x)\|u\|_p$$ is concretized for $p=1,2$ and $x$ arbitrary and for
$x=0$ and all $p$.

Notice that, for $n=2$ the results concerning the upper half-plane
$\mathbf {H}$ cannot be directly translated to the unit disk and
vice-versa. Although the unit disk $\mathbf{U}$ and the upper
half-plane $\,\mathbf{H}\,$ can be mapped to one-another by means of
M\"obius transformations, they are not interchangeable as domains
for Hardy spaces. Contributing to this difference is the fact that
the unit circle has finite (one-dimensional) Lebesgue measure while
the real line does not.

A complex harmonic function  $w$ in a region $D$ can be expressed as
$w=u+iv$ where $u$ and $v$ are real harmonic functions in $D$. For a
complex harmonic function we will use sometimes the abbreviation a
harmonic mapping. If $D$ is simply-connected, then there are two
analytic functions $h$ and $k$ defined on $D$ such that
$w=g+\overline h.$ For a complex harmonic function $w=g+\overline{
h}=u+i v$, denote by $D w(z)$ the formal differential matrix $D
w(z)=\begin{pmatrix}
  u_{x} & u_{y} \\
  v_{x} & v_{y}
\end{pmatrix}$. Its norm is given by \begin{equation*}|D
w|:=\max\{|Dw(z)l|:|l|=1\}.\end{equation*} Then
\begin{equation}\label{opernorm}|Dw(z)|=|g'(z)|+|h'(z)|.\end{equation}

Let $w$ be a harmonic function satisfying the Lipschitz condition,
when regarded as a function from the hyperbolic unit disk into the
complex plane $\mathbf C$ endowed with the Euclidean distance.  The
function $w$ is called \emph{Bloch} with the \emph{Bloch constants}
$$\beta_w=\sup_{z\neq z'}\frac{|w(z)-w(z')|}{d_h(z,z')}.$$ Here
$d_h$ is defined by $$ \tanh \frac{d_h(z,z')}{2}= \frac{|z-z'|}{|1-z
\overline{z'}|} \,.
$$
It can be proved that
\begin{equation}\label{isi}\beta_w=\sup_{z\in
\mathbf U}(1-|z|^2)|Dw(z)|.\end{equation} We refer to
\cite[Theorem~1]{cor} for the proof of \eqref{isi}. In the same
paper Colonna proved that, if $w$ is a harmonic mapping  of the unit
disk into itself, then there hold the following sharp inequality
\begin{equation}\label{ona}\beta_w\le \frac{4}{\pi}.\end{equation}
See also the book of Pavlovi\'c \cite[p.~53,54]{lib} for a related
problem.

An estimates similar to \eqref{ona} for magnitudes of derivatives of
bounded harmonic functions in the unit ball in $\mathbf R^3$ is
obtained by Khavinson in \cite{kha}.

Together with the Bloch constants, for a harmonic mapping of the
unit disk onto itself consider the \emph{hyperbolic Lipschitz
constant} defined by
$$\beta_w^{\mathrm{hyp}}:=\sup_{z\neq
z'}\frac{d_h(w(z),w(z'))}{d_h(z,z')}.$$ Since
$|dz|\le{|dz|}/(1-|z|^2),$ it follows that for $z,w\in \mathbf U$ we
have $d(z,w)\le d_h(z,w).$  Thus $$\beta_w\le
\beta_w^{\mathrm{hyp}}.$$ It follows by Schwarz-Pick lemma that, if
$w$ is an analytic function then $$\beta_w^{\mathrm{hyp}}\le 1,$$
and the equality is attained for M\"obius selfmappings  of the unit
disk. Very recently it is proved in \cite{kavu} that, for real
harmonic mappings of the unit disk onto itself there hold the
following sharp inequality \begin{equation}\label{pro}|\nabla w|\le
\frac{4}{\pi}\frac{1-|w(z)|^2}{1-|z|^2},\end{equation} and therefore
$\beta_w^{\mathrm{hyp}}\le \frac{4}{\pi}$ extending thus Colonna
result for real harmonic mappings. However if we drop the assumption
that $w$ is real, then $\beta_w^{\mathrm{hyp}}$ can be infinite. The
inequality \eqref{pro} can be considered as a real-part theorem  for
an analytic function. More than one approach can be found in the
book \cite{real}.

In this paper we prove the following results for the unit disk which
are analogous to the results of Maz'ya \& Kresin  and extend the
results of Colonna by proving the following theorems.

Since the case $p=1$ is well-known, we will assume in the sequel
that $p>1$.

\begin{theorem}[Main theorem]\label{condi}
Let $p> 1$ and let $q$ be its conjugate. Let $w\in h^p$ be a complex
harmonic function defined in the unit disk and let $z\neq 0$. Define
$\mathbf n=\frac{z}{|z|}$, and $\mathbf t=i\frac{z}{|z|}$.

a) We have the following sharp inequalities
\begin{equation}\label{dl}|D w(z)e^{i\tau}|\le C_p(z,e^{i\tau})
(1-r^2)^{-1/p-1}\|w\|_{h^p},\end{equation}

\begin{equation}\label{dd}|D w(z)|\le C_p(z)
(1-r^2)^{-1/p-1}\|w\|_{h^p},\end{equation} where $z=re^{i\alpha}$,
\[\begin{split}
 C_p(z,e^{i\tau}) &= \frac{1}{\pi}\left(\int_{-\pi}^{\pi}\frac{\abs{\cos(s+\tau-\alpha)^q}}{(1+r^2-2r \cos
s)^{1-q}}ds\right)^{1/q}\end{split}\] and
\begin{equation}\label{po}C_p(z)=\left\{
                 \begin{array}{ll}
                    C_p(z,\mathbf n), & \hbox{if $p<2$;} \\
                    C_p(z,\mathbf t), & \hbox{if $p\ge 2$.}
                 \end{array}
               \right. \end{equation}
Moreover
         \begin{equation}\label{ara}\left\{
                 \begin{array}{ll}
                  C_p(z,\mathbf t)\le C_p(z,e^{i\tau})\le  C_p(z,\mathbf n), & \hbox{if $p<2$;} \\
                   C_p(z,\mathbf n)\le C_p(z,e^{i\tau})\le  C_p(z,\mathbf t), & \hbox{if $p\ge 2$.}
                 \end{array}
               \right. \end{equation}

b)  For $p\ge 2$ the function $C_p(z)$ can be expressed as
\begin{equation}\label{kre}
C_p(z)= \frac{2^{1/q}}{\pi} \left(\mathrm{B}\left(\frac{1 + q}{2},
\frac{1}{2}\right) F\left({1 - \frac{3 q}{2}, 1 - q}; {1 + \frac
q2};
    r^2\right)\right)^{1/q},
\end{equation}
where $\mathrm{B}$ is the beta function and $F$ is the Gauss
hypergeometric function.

c) Finally $$C_p:=\sup_{z\in \mathbf U}C_p(z)=\left\{
        \begin{array}{ll}\frac{1}{\pi}\left(\int_{-\pi}^{\pi}\frac{\abs{\cos
s}^q}{(2-2 \cos s)^{1-q}}ds\right)^{1/q}, & \hbox{if $1< p <2$;}
          \\
\frac{1}{\pi}\left(\int_{-\pi}^{\pi}\frac{\abs{\sin s}^q}{(2-2 \cos
s)^{1-q}}ds\right)^{1/q}, & \hbox{if $ p\ge2$.}
        \end{array}
      \right.$$ The constant $C_p$ is optimal
for real harmonic functions as well.
\end{theorem}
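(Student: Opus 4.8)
The plan is to reduce the computation of $C_p=\sup_{z\in\mathbf U}C_p(z)$ to a one–parameter monotonicity problem and then read off the boundary value. First I would record that $C_p(z)$ depends only on $r=\abs z$: replacing $w$ by $w(e^{i\alpha}\,\cdot\,)$ (which preserves $\norm{w}_{h^p}$) together with the corresponding rotation of the direction leaves the right–hand sides of \eqref{dl}--\eqref{dd} invariant. Hence by \eqref{po}, taking $\tau=\alpha$ for $\mathbf n$ (which yields the factor $\abs{\cos s}^q$) and $\tau=\alpha+\pi/2$ for $\mathbf t$ (which yields $\abs{\sin s}^q$), one may write
\begin{equation*}
C_p(z)=C_p(r)=\frac1\pi\left(\int_{-\pi}^{\pi} g(s)\,(1+r^2-2r\cos s)^{\,q-1}\,ds\right)^{1/q},
\end{equation*}
where $g(s)=\abs{\cos s}^q$ if $1<p<2$ and $g(s)=\abs{\sin s}^q$ if $p\ge 2$, and $(1+r^2-2r\cos s)^{q-1}$ is the reciprocal of the factor $(1+r^2-2r\cos s)^{1-q}$ occurring in \eqref{dl}. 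Thus $C_p=\sup_{0\le r<1}C_p(r)$, and the asserted formulas are exactly $C_p(1)$, obtained by letting $1+r^2-2r\cos s$ become $2-2\cos s$. The lower bound $C_p\ge C_p(1)$ is then immediate: for $s\neq0$ the integrand tends to $g(s)(2-2\cos s)^{q-1}$ as $r\to1^-$ and is dominated by $g(s)(1+r)^{2(q-1)}\le 4^{\,q-1}g(s)\in L^1$, so dominated convergence gives $C_p(r)\to C_p(1)$.

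The crux is the reverse inequality $C_p(r)\le C_p(1)$ for all $r\in[0,1)$, for which I would prove the stronger statement that $r\mapsto C_p(r)$ is nondecreasing. Writing $I(r)=\pi^qC_p(r)^q$ and $D=D(r,s)=1+r^2-2r\cos s$, differentiation together with the identity $r-\cos s=\tfrac1{2r}\bigl(D-(1-r^2)\bigr)$ reduces $I'(r)\ge0$ to
\begin{equation*}
\int_{-\pi}^{\pi} g(s)\,D^{\,q-2}\bigl(D-(1-r^2)\bigr)\,ds\ge0 .
\end{equation*}
For $1<p<2$ (so $q>2$) this follows cleanly by pairing $s$ with $\pi-s$: since $g(\pi-s)=g(s)$ while $D(r,\pi-s)=1+r^2+2r\cos s\ge D(r,s)$, folding the integral onto $[0,\pi/2]$ leaves the integrand $g(s)\bigl[B^{q-2}(r+\cos s)-A^{q-2}(\cos s-r)\bigr]$ with $0<A=D(r,s)\le B=D(r,\pi-s)$, which is nonnegative because $q-2>0$ forces $B^{q-2}\ge A^{q-2}$ and $r+\cos s\ge\cos s-r$.

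The main obstacle is the range $p\ge2$ (so $1<q\le2$): here $q-2\le0$, the paired bracket can change sign near $s=0$, and one must genuinely exploit that the weight $g(s)=\abs{\sin s}^q$ suppresses the small–$s$ region. For this range I would instead use the hypergeometric representation \eqref{kre}, reducing the claim to monotonicity of $x\mapsto F\bigl(1-\tfrac{3q}2,1-q;1+\tfrac q2;x\bigr)$ on $[0,1)$; by $\tfrac{d}{dx}F(a,b;c;x)=\tfrac{ab}{c}F(a+1,b+1;c+1;x)$ this amounts to $F(a+1,b+1;c+1;x)\ge0$, which is immediate when $q\le\tfrac43$ (all shifted parameters are then nonnegative) but for $\tfrac43<q\le2$ requires a separate positivity argument for a hypergeometric series with one negative upper parameter. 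Granting the monotonicity, $\sup_{0\le r<1}C_p(r)=C_p(1)$, establishing the formulas. The value $C_p(1)$ is a Beta integral: with $2-2\cos s=4\sin^2(s/2)$ and $u=s/2$ it becomes a constant multiple of $\mathrm{B}\bigl(\tfrac{3q-1}2,\tfrac{q+1}2\bigr)$ for $p\ge2$, in agreement with Gauss's evaluation $F(a,b;c;1)=\Gamma(c)\Gamma(c-a-b)/\bigl(\Gamma(c-a)\Gamma(c-b)\bigr)$ of \eqref{kre} at $x=1$, valid since $c-a-b=3q-1>0$. Finally, optimality for real harmonic functions follows from the sharpness mechanism of part a): equality in the Hölder step is attained by $f=\const\cdot\sign(\partial_{e^{i\tau}}P(z,\cdot))\,\abs{\partial_{e^{i\tau}}P(z,\cdot)}^{q-1}$, and since the directional derivative of the Poisson kernel is real–valued this density is real, so $w=P[f]$ is a real harmonic function realizing $C_p(r)$; letting $r\to1$ along such real extremizers shows that $C_p$ cannot be lowered even when $w$ is required to be real.
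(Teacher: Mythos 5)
Your proposal does not cover the whole theorem, and even the part it focuses on has an explicitly acknowledged hole. Parts a) and b) are assumed rather than proved: you invoke \eqref{po} to decide that the extremal direction gives the weight $\abs{\cos s}^q$ for $p<2$ and $\abs{\sin s}^q$ for $p\ge 2$ (in the paper this is the content of the Poisson--H\"older--M\"obius computation plus Lemma~\ref{male}), and for $p\ge 2$ your whole argument runs through the representation \eqref{kre}, which is itself part b) and is derived in the paper from the table formula \eqref{formula}. More seriously, inside part c) the key step fails to be established exactly where it is delicate: you reduce monotonicity of $r\mapsto C_p(r)$ for $p\ge2$ to positivity of $F\bigl(2-\tfrac{3q}{2},\,2-q;\,2+\tfrac q2;\,x\bigr)$ on $[0,1)$, verify it only when $q\le\tfrac43$ (i.e. $p\ge4$), and then proceed ``granting the monotonicity'' for $\tfrac43<q\le2$, i.e. precisely the range $2<p<4$. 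That is a genuine gap, not a routine omission. (It is fillable: for $q\in(\tfrac43,2)$ one has $(2-\tfrac{3q}2)_n<0$ while $(2-q)_n>0$ and $(2+\tfrac q2)_n>0$ for $n\ge1$, so every nonconstant Taylor coefficient is negative and the function decreases on $[0,1]$; its value at $x=1$ equals $\Gamma(2+\tfrac q2)\Gamma(3q-2)/\bigl(\Gamma(2q)\Gamma(\tfrac{3q}2)\bigr)>0$ by Gauss's summation, which is legitimate since $3q-2>0$. But this argument is absent from your text, so as written the case $2<p<4$ is unproven.)

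For what it is worth, the pieces you do prove are sound and differ from the paper in an interesting way. Your pairing $s\leftrightarrow\pi-s$ argument for $1<p<2$ (where $q>2$) is correct and self-contained, and your sharpness mechanism --- H\"older equality with $f=\const\cdot\sign(\Re K)\abs{\Re K}^{q-1}$, which is real because the kernel is real, followed by taking the supremum over $z$ --- is essentially the paper's extremal family $f_\rho$ written intrinsically, and it does establish optimality for real harmonic functions. The structural contrast is that the paper never proves monotonicity in $r$ at all: its Lemma~\ref{polet}, resting on Proposition~\ref{lema} (subharmonicity of $z\mapsto\log\int_A\abs{f(z,\omega)}^b\,d\mu(\omega)$) and the maximum principle in Corollary~\ref{copo}, places the supremum of $C_p(z,l)$ on the boundary circle for every $q$ at once, with no case split and no hypergeometric input. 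Your monotonicity strategy is more elementary where it works, but it collapses exactly in the regime $2<p<4$ where the sign analysis of the integrand is subtle --- which is why the paper's soft maximum-principle argument is the more robust tool for part c).
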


\begin{theorem}\label{two}
Let $p> 1$ and let $w\in h^p$, be a complex harmonic function
defined in the unit disk. Then we have the following sharp
inequalities
$$|\partial w(z)|, |\bar \partial w(z)|\le c_p(z) (1-|z|^2)^{-1/p-1}\|w\|_{h^p},$$
and
$$|\partial w(z)|, |\bar \partial w(z)|\le c_p (1-|z|^2)^{-1/p-1}\|w\|_{h^p},$$
where
\begin{equation}\label{cip}c_p(z)=(2\pi)^{1/q-1}( F(1 - q, 1 - q; 1;
r^2))^{1/q}\end{equation} and
\begin{equation}\label{cipir}c_p = 2^{\frac{-1 + q}{q}} \pi^{-1 + \frac{1}{2
q}}\left(\frac{\Gamma(-1/2 +
q)}{\Gamma(q)}\right)^{1/q}.\end{equation}
\end{theorem}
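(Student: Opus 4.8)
The plan is to combine the Poisson representation of $w$ with Hölder's inequality, and then to reduce everything to a single classical integral that turns out to be a Gauss hypergeometric function. First I would write $w$ as the Poisson integral of its boundary data $f\in L^p(\mathbf T)$, so that $\|w\|_{h^p}=\|f\|_p$ by \eqref{more}, and recall that for $w=g+\overline h$ one has $\partial w=g'$ and $\bar\partial w=\overline{h'}$, i.e. $\partial w$ and $\bar\partial w$ are exactly the Wirtinger derivatives of $w$. Differentiating the Poisson kernel under the integral sign gives
\[
\partial w(z)=\int_{\mathbf T}\partial_z P(z,\zeta)\,f(\zeta)\,d\sigma(\zeta),\qquad
P(z,\zeta)=\frac{1-|z|^2}{|\zeta-z|^2}.
\]
A direct computation yields $\partial_z P(z,\zeta)=\dfrac{(\overline\zeta-\overline z)(1-\overline z\zeta)}{|\zeta-z|^4}$, and since $|1-\overline z\zeta|=|\zeta-z|$ for $\zeta\in\mathbf T$, the modulus collapses to the remarkably simple expression $|\partial_z P(z,\zeta)|=|\zeta-z|^{-2}$. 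Because $P$ is real-valued we have $\partial_{\bar z}P=\overline{\partial_z P}$, so $|\partial_{\bar z}P|$ has the identical modulus; this is precisely why the bounds for $\partial w$ and $\bar\partial w$ coincide.

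Next I would apply Hölder's inequality with exponents $p$ and $q$ to the integral above, so that
\[
|\partial w(z)|\le\Big(\int_{\mathbf T}|\zeta-z|^{-2q}\,d\sigma(\zeta)\Big)^{1/q}\|f\|_p .
\]
By rotational invariance I may take $z=r$, reducing the kernel integral to $\int_{-\pi}^{\pi}(1-2r\cos\theta+r^2)^{-q}\,d\theta$. Expanding $(1-re^{i\theta})^{-q}(1-re^{-i\theta})^{-q}$ in power series and integrating term by term identifies this integral with $F(q,q;1;r^2)$ up to the explicit factor $(2\pi)^{1/q-1}$ coming from the normalization of the measure against $\|f\|_p$. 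This already gives a clean bound, but in the form $(F(q,q;1;r^2))^{1/q}$.

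The passage to the stated closed forms \eqref{cip} and \eqref{cipir} is then pure special-function manipulation. Euler's transformation $F(q,q;1;x)=(1-x)^{1-2q}F(1-q,1-q;1;x)$ extracts the factor $(1-r^2)^{1-2q}$; raising to the power $1/q$ and using $\tfrac{1-2q}{q}=-1-\tfrac1p$ produces exactly the factor $(1-|z|^2)^{-1/p-1}$ together with the residual constant $c_p(z)=(2\pi)^{1/q-1}(F(1-q,1-q;1;r^2))^{1/q}$, which is \eqref{cip}. For the uniform constant I would take the supremum over $z$: the series $F(1-q,1-q;1;x)=\sum_n\frac{((1-q)_n)^2}{(n!)^2}x^n$ has nonnegative coefficients (the numerator is a perfect square), so it is increasing on $[0,1)$ and the supremum is attained in the limit $r\to1$. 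Gauss's summation theorem then gives $F(1-q,1-q;1;1)=\Gamma(2q-1)/\Gamma(q)^2$ (valid since $2q-1>0$), and Legendre's duplication formula applied to $\Gamma(2q-1)$ converts this into $2^{(q-1)/q}\pi^{-1+1/(2q)}(\Gamma(q-\tfrac12)/\Gamma(q))^{1/q}$, i.e. \eqref{cipir}.

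Finally, for sharpness I would exhibit, for each fixed $z$, the Hölder extremizer: the boundary function $f=\overline{\partial_z P(z,\cdot)}\,|\partial_z P(z,\cdot)|^{q-2}$ lies in $L^p(\mathbf T)$, and its Poisson extension is an $h^p$ function realizing equality in both Hölder's inequality and the triangle inequality, hence in the pointwise estimate; letting $r\to1$ along these extremizers shows the uniform constant $c_p$ cannot be improved. The only genuinely delicate points are the kernel simplification $|\partial_z P|=|\zeta-z|^{-2}$ and the chain of hypergeometric identities (Euler transformation, Gauss summation, Legendre duplication) needed to reconcile the two closed-form expressions and to locate the supremum at $r\to1$; the analytic core—Poisson representation together with Hölder—is routine.
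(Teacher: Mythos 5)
Your proof is correct, and its skeleton---Poisson representation plus H\"older's inequality applied to the kernel $\partial_z P$, whose modulus on the unit circle collapses to $|\zeta-z|^{-2}$---is exactly the paper's. But you carry out the three technical steps by genuinely different means. First, to evaluate $\int_{0}^{2\pi}|\zeta-z|^{-2q}\,d\theta$ the paper performs the M\"obius substitution $e^{i\theta}=(r-e^{is})/(1-re^{is})$, turning the integral into $(1-r^2)^{1-2q}\int_0^{2\pi}(1+r^2-2r\cos s)^{q-1}\,ds$, and then quotes the tabulated formula \eqref{formula} from Prudnikov--Brychkov--Marichev; you instead expand $(1-re^{i\theta})^{-q}(1-re^{-i\theta})^{-q}$ in a power series to get $F(q,q;1;r^2)$ and apply Euler's transformation, which is self-contained and needs no table. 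Second, to pass from $c_p(z)$ to the uniform constant the paper invokes the subharmonicity Lemma~\ref{polet} (with $\lambda=0$) to push the integral to $r=1$; your observation that $F(1-q,1-q;1;x)$ has nonnegative Taylor coefficients (perfect squares) gives monotonicity in $r$ immediately and is more elementary, and Gauss summation plus Legendre duplication then reproduce \eqref{cipir} exactly. Third, for sharpness the paper constructs an explicit family $f_\rho^{\pm}$ via the same M\"obius change of variables and lets $\rho\to1$, thereby verifying only the uniform constant $c_p$; your H\"older extremizer $f=\overline{\partial_z P}\,\abs{\partial_z P}^{q-2}$ (bounded on $\mathbf T$ for fixed $z$, hence in $L^p$) attains equality at each fixed $z$, which establishes the stronger statement that every pointwise constant $c_p(z)$ is sharp for complex harmonic $w$, the uniform sharpness then following by letting $r\to1$. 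This extremizer argument is precisely what does \emph{not} survive for the analytic Corollary~\ref{pol}, since the Poisson extension of the extremizer need not be holomorphic---consistent with the paper's remark that sharpness of $c_p(z)$ there is open. One small caution: like the paper's own proof, you implicitly measure the boundary data in the unnormalized norm $\bigl(\int_0^{2\pi}|f|^p\,dt\bigr)^{1/p}$ when you produce the factor $(2\pi)^{1/q-1}$; with the normalized measure of \eqref{ee} that factor would disappear, an inconsistency already present between the theorem's statement and the paper's proof, so it is not a defect of your argument.
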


\begin{remark} a) In particular, if in Theorem~\ref{condi}
 we take $p=2$,
then we have the following estimate
\begin{equation}\label{nab}|\nabla w(z)|\leq
\frac{1}{\sqrt{\pi}}\frac{(1+|z|^2)^{1/2}}{(1-|z|^2)^{3/2}}\|w\|_{h^2}.\end{equation}
If we assume $w$ is a real harmonic function, i.e. $w=g+\overline
g$, where $g$ is an analytic function, then this estimate is
equivalent to the real part theorem
\begin{equation}\label{part}|g'(z)|\leq
\frac{1}{\sqrt{\pi}}\frac{(1+|z|^2)^{1/2}}{(1-|z|^2)^{3/2}}\|\Re
g\|_{h^2}.\end{equation} For the proof of \eqref{part} we refer to
\cite[pp. 87, 88]{real}. See also a higher dimensional
generalization of \eqref{nab} by Maz'ya and Kresin in the recent
paper \cite[Corollary~3]{Mar1} for $n\ge 2$. Also the relation
\eqref{nab} for real $w$ can be deduced from work of Macintyre and
Rogosinski for analytic functions, see \cite[p.~301]{maro}.
%

b) On the other hand if take $p=\infty$, then $C_p=\frac{4}{\pi}$
and therefore the relation \eqref{dd} coincides  with the result of
Colonna. While for real $w$, it is a real part theorem (\cite{kha})
which can be expressed as \begin{equation}\label{repart}|g'(z)|\leq
\frac{4}{\pi}\frac{1}{1-|z|^2}\|\Re g\|_{\infty}.\end{equation}

c)  Notice that $c_p<C_p<2c_p,$ if $p>1,$ and $C_1 =
2c_1=\frac{4}{\pi}$. On the other hand $c_\infty=1$ coincides with
the constant of the Schwarz lemma for analytic functions. Notice
also this interesting fact, the minimum of constants $C_p$ is
achieved for $p=2$ and is equal to $C_2=\sqrt{2/\pi}$. The graphs of
functions $C_p$ and $c_p$, $1\le p\le 20$ are shown in Figure~1 and
Figure~2.

d) From Theorem~\ref{condi} we find out that, the Khavinson
hypothesis (see \cite{Mar1}) is not true for $n=2$ and $2<p<\infty$.
Namely the maximum of the absolute value of the directional
derivative of a harmonic function with a fixed $L^p$-norm of its
boundary values is attained at the radial direction for $p\le 2$ and
at the tangential direction for $2<p<\infty$.
\end{remark}

\begin{figure}[htp]\label{lako}
\centering
\includegraphics{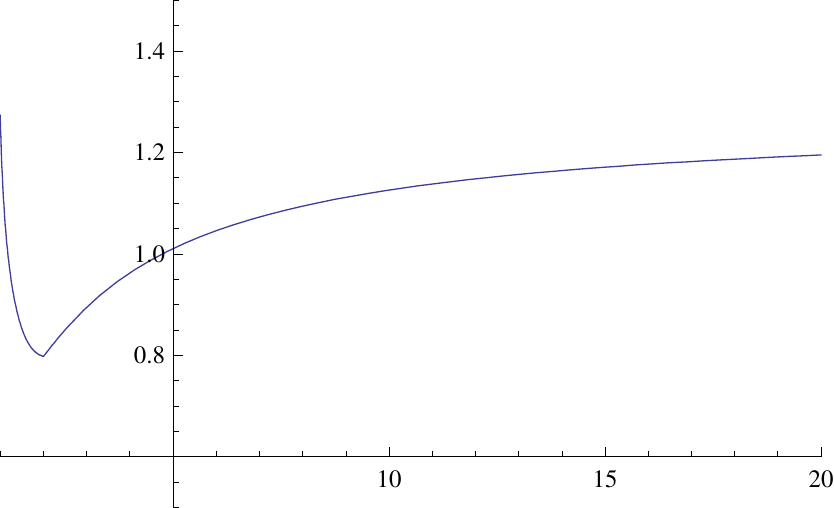}
\caption{The graph of  $C_p$.}
\end{figure}

\begin{figure}[htp]\label{la}
\centering
\includegraphics{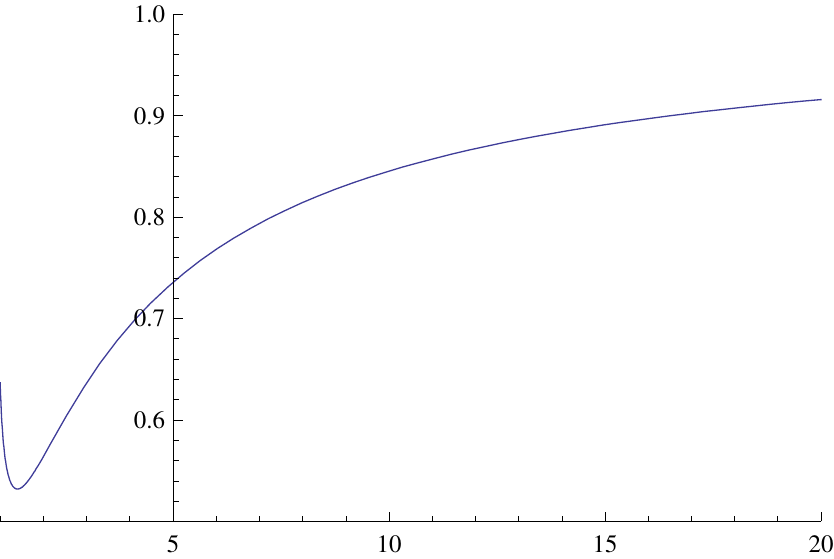}
\caption{The graph of  $c_p$.}
\end{figure}
In the classical paper  \cite[(8.3.8)]{maro} of Macintyre and
Rogosinski they obtained the inequality \begin{equation}
\label{primo}|f'(z)|\le
\left(1+\frac{r^2}{(p-1)^2}\right)^{1/q}(1-|z|^2)^{-1-1/p}\|f\|_{H^p}.\end{equation}
In the following direct corollary of Theorem~\ref{two} we improve
the inequality \eqref{primo} by proving

\begin{corollary}\label{pol}
Let $w=f(z)$ be an analytic function from the Hardy class
$H^p(\mathbf U)$. Then there hold the following inequality
\begin{equation}\label{prim}|f'(z)|\le c_p(z)(1-|z|^2)^{-1-1/p}\|f\|_{H^p},\end{equation} where
$c_p(z)$ is defined in \eqref{cip}.
\end{corollary}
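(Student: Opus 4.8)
The plan is to observe that Corollary~\ref{pol} is essentially the analytic special case of Theorem~\ref{two}, so the entire strategy is to reduce the statement about $f'$ for $f\in H^p(\mathbf U)$ to the estimate for $\partial w$ already established there. First I would recall that every analytic function $f$ is in particular a complex harmonic function, with $w=f=g+\overline h$ where $g=f$ and $h\equiv 0$. For such a function the Wirtinger derivatives collapse: $\partial w = g'(z) = f'(z)$ and $\bar\partial w = \overline{h'(z)} = 0$. Hence the quantity $|\partial w(z)|$ controlled by Theorem~\ref{two} is exactly $|f'(z)|$.

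The second step is to reconcile the two norms. For an analytic $f$, the harmonic Hardy norm $\|f\|_{h^p(\mathbf U)}$ coincides with the classical analytic Hardy norm $\|f\|_{H^p(\mathbf U)}$, since both are defined by the same supremum of $L^p$-means over circles $|z|=r$ (see \eqref{ee} and \eqref{more}); the boundary function of an $H^p$ function lies in $L^p(\mathbf T)$ and the nontangential/radial limits agree. Thus $\|w\|_{h^p}=\|f\|_{H^p}$, and no genuinely new normalization constant enters.

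Feeding these two identifications into the first displayed inequality of Theorem~\ref{two}, namely $|\partial w(z)|\le c_p(z)(1-|z|^2)^{-1/p-1}\|w\|_{h^p}$, yields immediately
\[
|f'(z)|\le c_p(z)(1-|z|^2)^{-1-1/p}\|f\|_{H^p},
\]
with $c_p(z)$ the function defined in \eqref{cip}. This is precisely \eqref{prim}. The sharpness asserted in Theorem~\ref{two} is inherited, since the extremal configurations for $|\partial w|$ can be realized by genuinely analytic $w$ (the extremal boundary data produce an analytic function), so the constant $c_p(z)$ cannot be improved within the analytic subclass either.

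I expect there to be essentially no obstacle: the only point requiring a moment's care is the identification $\|f\|_{h^p}=\|f\|_{H^p}$, which must be invoked rather than re-derived, and the verification that the analytic case is covered by the harmonic Theorem~\ref{two} rather than being a strictly harder or strictly easier regime. One should also briefly note, for the comparison with Macintyre--Rogosinski's \eqref{primo}, that $c_p(z)=(2\pi)^{1/q-1}(F(1-q,1-q;1;r^2))^{1/q}$ is no larger than $\bigl(1+r^2/(p-1)^2\bigr)^{1/q}$; this inequality between the hypergeometric expression and the elementary bound is the one genuinely computational remark, but it is not needed for the proof of \eqref{prim} itself—only to justify the word ``improve.''
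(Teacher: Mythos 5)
Your proof of \eqref{prim} is correct and is exactly the paper's route: the paper presents Corollary~\ref{pol} as a direct specialization of Theorem~\ref{two}, via the observations that an analytic $f$ is harmonic with $\partial w=f'$, $\bar\partial w=0$, and $\|f\|_{h^p}=\|f\|_{H^p}$.

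One caveat: your closing claim that sharpness of $c_p(z)$ is \emph{inherited} by the analytic subclass is unjustified and contradicts the paper. The extremal boundary data used to verify sharpness in Theorem~\ref{two}, namely $f^{\pm}_\rho(e^{it})=(1-\rho^2)^{-1/p}\abs{\cos s\,(1-\cos s)}^{q-1}e^{\pm is}$, produce Poisson extensions that are harmonic but not holomorphic in general, so the extremal configurations do \emph{not} lie in $H^p$. Indeed, the paper's remark following the corollary states explicitly that it is not known whether the functions $c_p(z)$ are sharp for analytic functions, except for $p=2$ where \eqref{prim} coincides with a known sharp inequality of Macintyre and Rogosinski. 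Since the corollary as stated asserts only the inequality and not sharpness, this does not invalidate your proof of the statement, but the sharpness assertion should be deleted.
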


\begin{remark}
Corollary~\ref{pol} is an improvement of corresponding inequality
\cite[(8.3.8)]{maro} because
$$(2\pi)^{1-q} F(1 - q, 1 - q; 1; r^2)< 1+\frac{r^2}{(p-1)^2}$$ for
all $q>1$. 
%
It is not known by the authors if all functions $c_p(z)$ of
Corollary~\ref{pol} are sharp, however the function
$c_2(z)=\frac{\sqrt{1 + |z|^2}}{\sqrt{2 \pi}}$ is sharp, because
\eqref{prim} coincides with the sharp inequality \cite[p.~301,
eq.~(7.2.1)]{maro} for $p=2$. On the other hand the power $-1-1/p$
is optimal see e.g. Garnett \cite[p.~86]{gar}.  The paper
\cite{maro} contains some sharp estimates $|f^{(k)}(z)|\le
c_p\|f\|_p$ for $f\in H^p(\mathbf U)$ and $k\ge 1$ but $p$ depends
on $k$ and it seems that if $k=1$ then $p$ can be only 1 or 2.
\end{remark}

\section{Proofs}
We need the following lemmas
\begin{lemma}\label{male}
Let $a_q(t)$, $t\in[0,2\pi]$, $q\ge 1$, $0\leq r\leq1$ be a function
defined by
$$a_q(t)= \int_{-\pi}^{\pi}\abs{\cos(s-t)}^q|r-e^{is
}|^{2q-2}ds.$$ Then $$\max_{0\le t\le 2\pi}a_q(t)=\left\{
                                                     \begin{array}{ll}
                                                      a_q(\frac{\pi}{2}) , & \hbox{if $q\le2$;} \\
                                                       a_q(0), & \hbox{if $q>2$.}
                                                     \end{array}
                                                   \right. $$
                                                   and
                                                   $$\min_{0\le t\le 2\pi}a_q(t)=\left\{
                                                     \begin{array}{ll}
                                                      a_q(0) , & \hbox{if $q\le2$;} \\
                                                       a_q(\frac{\pi}{2}), & \hbox{if $q>2$.}
                                                     \end{array}
                                                   \right. $$
\end{lemma}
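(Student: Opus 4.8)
The plan is to exploit the symmetries of $a_q$ and thereby reduce the lemma to a monotonicity statement on a quarter period. First I would record that $\abs{r-e^{is}}^{2q-2}=(1+r^2-2r\cos s)^{q-1}$, so that
$$a_q(t)=\int_{-\pi}^{\pi}\abs{\cos(s-t)}^q\,(1+r^2-2r\cos s)^{q-1}\,ds.$$
Replacing $t$ by $t+\pi$ turns $\cos(s-t)$ into $-\cos(s-t)$, which is invisible under the modulus, so $a_q$ is $\pi$-periodic; and the substitution $s\mapsto -s$ shows $a_q(-t)=a_q(t)$, so $a_q$ is even. Being even and $\pi$-periodic, $a_q$ is symmetric about both $t=0$ and $t=\pi/2$, so these are automatically critical points and it suffices to study $a_q$ on $[0,\pi/2]$. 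The entire lemma will then follow once I prove that $a_q$ is monotone on $[0,\pi/2]$, \emph{decreasing} when $q>2$ and \emph{increasing} when $q<2$ (and constant when $q=2$), since the two boundary values $a_q(0)$ and $a_q(\pi/2)$ are then the only candidates for the global extrema.

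To determine the direction of monotonicity I would differentiate the shifted representation $a_q(t)=\int_{-\pi}^{\pi}\abs{\cos u}^q(1+r^2-2r\cos(u+t))^{q-1}\,du$, in which only the smooth Poisson-type weight depends on $t$; differentiation under the integral (legitimate for $0\le r<1$, the boundary case $r=1$ being recovered by continuity of $a_q$ in $r$) and the substitution $v=u+t$ give
$$a_q'(t)=2r(q-1)\int_{-\pi}^{\pi}\abs{\cos(v-t)}^q\,(1+r^2-2r\cos v)^{q-2}\sin v\,dv.$$
Here the factor $(1+r^2-2r\cos v)^{q-2}\sin v$ is odd in $v$, so only the odd part of $\abs{\cos(v-t)}^q$ survives, and folding $[-\pi,\pi]$ to $[0,\pi]$ yields $a_q'(t)=2r(q-1)\int_0^{\pi}\Delta_t(v)(1+r^2-2r\cos v)^{q-2}\sin v\,dv$, where $\Delta_t(v):=\abs{\cos(v-t)}^q-\abs{\cos(v+t)}^q$. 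A further fold $v\mapsto\pi-v$, using the reflection identity $\Delta_t(\pi-v)=-\Delta_t(v)$, collapses this to the single quarter-period integral
$$a_q'(t)=2r(q-1)\int_0^{\pi/2}\Delta_t(v)\sin v\,\big[(1+r^2-2r\cos v)^{q-2}-(1+r^2+2r\cos v)^{q-2}\big]\,dv.$$

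The main obstacle is the sign analysis of $\Delta_t$, which is what makes the final integrand sign-definite. I would fix $t\in(0,\pi/2)$ and split $[0,\pi]$ at the zeros $v=\tfrac\pi2\mp t$ of the two cosines. On $\{v<\tfrac\pi2-t\}$ both cosines are positive and $\abs{\cos(v-t)}-\abs{\cos(v+t)}=2\sin v\sin t\ge0$; on the middle band one cosine changes sign and the difference equals $2\cos v\cos t$, which is $\ge0$ for $v\le\tfrac\pi2$ and $\le0$ for $v\ge\tfrac\pi2$; on $\{v>\tfrac\pi2+t\}$ both are negative and the difference is $-2\sin v\sin t\le0$. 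Hence $\Delta_t(v)\ge0$ on $[0,\pi/2]$ (and $\le0$ on $[\pi/2,\pi]$, consistent with the reflection identity used above). Since $\Delta_t(v)\ge0$, $\sin v\ge0$, and $1+r^2-2r\cos v\le 1+r^2+2r\cos v$ on $[0,\pi/2]$, the bracketed difference of $(q-2)$-powers is $\le0$ when $q>2$ and $\ge0$ when $q<2$. Therefore $a_q'(t)\le0$ for $q>2$ and $a_q'(t)\ge0$ for $q<2$ throughout $(0,\pi/2)$, establishing the required monotonicity and hence the stated locations of the maximum and minimum; the case $q=2$ (and $r=0$) gives $a_q'\equiv0$, so $a_q$ is constant and the conclusion holds trivially.
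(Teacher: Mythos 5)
Your proof is correct and follows essentially the same route as the paper's: differentiate $a_q$ under the integral sign after shifting the $t$-dependence into the kernel, fold the integral onto the quarter period $[0,\pi/2]$ using the symmetries, and read off the sign of $a_q'(t)$ as the product of the two sign-definite factors $\abs{\cos(v-t)}^q-\abs{\cos(v+t)}^q$ and the difference of $(q-2)$-powers of the kernel. The differences are cosmetic — the paper folds via $s\mapsto s+\pi$ and a shift by $\pi/2$, arriving at the same quarter-period integrand written with sines — and your three-band sign analysis of $\Delta_t$ spells out a step the paper only asserts.
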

\begin{proof} Since $q=1$ is trivial
let $q>1$ and
$$a(t):=a_q(t)=\int_{-\pi}^{\pi}\abs{\cos (t-s)}^q(1+r^2-2r\cos s)^{q-1}ds.$$
Note that $a$ is $\pi-$periodic. Because sub-integral expression is
$2\pi-$periodic with respect to $s$ we obtain
$$a(t)=\int_{0}^{2\pi}\abs{\cos s}^q(1+r^2-2r\cos (t+s))^{q-1}ds,$$
and therefore
$$a^{\prime}(t)=2(q-1)r\int_{0}^{2\pi}\abs{\cos s}^q \sin (t+s) (1+r^2-2r\cos(t+s))^{q-2}ds.$$
Again by using the periodicity of sub-integral expression
$$a^{\prime}(t)=2(q-1)r\int_{0}^{2\pi}\abs{\cos (t-s)}^q \sin s (1+r^2-2r\cos s)^{q-2}ds.$$
Next we need some transformations
\[\begin{split} a^{\prime}(t)&=2(q-1)r\int_{0}^{\pi}\abs{\cos
(t-s)}^q \sin s (1+r^2- 2r\cos s)^{q-2}ds\\&
+2(q-1)r\int_{\pi}^{2\pi}\abs{\cos (t-s-\pi)}^q \sin (s+\pi) (1+r^2-
2r\cos (s+\pi))^{q-2}ds\\& =2(q-1)r\int_{0}^{\pi}\abs{\cos (t-s)}^q
\sin s \, Q\left(r,s-{\pi}/{2}\right)ds
\\&=2(q-1)r\int_{-\pi/2}^{\pi/2}\abs{\sin (t-s)}^q \cos s\, Q(r,s)ds,\end{split}\] where $$Q(r,s)=(1+r^2+2r\sin
s)^{q-2}-(1+r^2-2r\sin s)^{q-2}.$$ Thus the derivative is
$$a^{\prime}(t)=2r(q-1)\int_{-\pi/2}^{\pi/2} h(t,s) \cos s ds,$$
where
$$h(t,s)=\abs{\sin (t-s)}^q Q(r,s).$$
Also $a^{\prime}(t)$ is $\pi-$periodic and
$$a^{\prime}(0)=a^{\prime}(\pi /2)=0.$$
Further
$$h(t,s)+h(t,-s)=(\abs{\sin (t-s)}^q -\abs{\sin (t+s)}^q )Q(r,s).$$
If  $1<q<2$, then for $0<t<\pi/2$ we have
$$h(t,s)+h(t,-s)>0,\ \  0<s<\pi/2$$
and  $\pi/2<t<\pi$
$$h(t,s)+h(t,-s)>0,\ \  0<s<\pi/2.$$
We claim that
$$ a^{\prime}(t)=2r(q-1)\int_{0}^{\pi/2} (h(t,s)+h(t,-s)) \cos s ds>0,\ \  0<t<\pi/2 $$
and
$$ a^{\prime}(t)=2r(q-1)\int_{0}^{\pi/2} (h(t,s)+h(t,-s)) \cos s ds<0, \pi/2<t<\pi. $$
It means that the minimum of $a$ is achieved in $0$ and  the maximum
in $\frac{\pi}2$.

Similarly can be treated the case $q>2$. For  $q=2$ the function
$a(t)$ is a constant. The proof of Lemma~\ref{male} is completed.
\end{proof}

\begin{lemma}\label{polet}
Let $\lambda\ge 0$, $0\le r\le 1$ and $q\geq1$. For all $t$ there
exists $t'\in [0,2\pi]$ such that
$$\int_{0}^{2\pi} |\cos(s-t)|^\lambda |r-e^{is}|^{2q-2} ds
\le \int_{0}^{2\pi} |\cos(s-t')|^\lambda |1-e^{is}|^{2q-2}ds.
$$
\end{lemma}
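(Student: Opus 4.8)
The plan is to recognize the left–hand integral as the value at $w=r$ of a subharmonic function of a complex variable $w$, and then to invoke the maximum principle, exploiting that moving the radius out to the boundary corresponds to a rotation, i.e.\ to a shift of the angle $t$.

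First I would use $|r-e^{is}|=|1-re^{is}|$ and promote the real parameter $r$ to a complex variable. Define, for $w\in\mathbb C$,
$$\Phi_t(w)=\int_{0}^{2\pi}|\cos(s-t)|^\lambda\,|1-we^{is}|^{2q-2}\,ds,$$
so that the quantity to be estimated is exactly $\Phi_t(r)$. For each fixed $s$ the map $w\mapsto 1-we^{is}$ is affine, hence holomorphic, and since $2q-2\ge 0$ the function $w\mapsto|1-we^{is}|^{2q-2}=\exp\bigl((q-1)\log|1-we^{is}|^2\bigr)$ is subharmonic on $\mathbb C$ (the modulus of a holomorphic function raised to a nonnegative power). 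Because $\lambda\ge 0$, the weight $|\cos(s-t)|^\lambda\,ds$ is a finite positive measure on $[0,2\pi]$, and integrating a family of subharmonic functions against such a measure again yields a subharmonic function; moreover $\Phi_t$ is continuous on $\overline{\mathbf U}$, since for $|w|\le 1$ the integrand $|1-we^{is}|^{2q-2}$ stays bounded (here the hypothesis $q\ge 1$ is precisely what prevents any singularity).

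Next I would apply the maximum principle for subharmonic functions: as $\Phi_t$ is subharmonic on $\mathbf U$ and continuous on $\overline{\mathbf U}$, we get $\Phi_t(r)\le\max_{|w|=1}\Phi_t(w)$ for every $r\in[0,1]$. Finally a rotation–shift computation identifies the boundary values: writing $w=e^{i\theta}$ and substituting $s\mapsto s-\theta$ gives
$$\Phi_t(e^{i\theta})=\int_{0}^{2\pi}|\cos(s-(t+\theta))|^\lambda\,|1-e^{is}|^{2q-2}\,ds,$$
which is exactly the right–hand side of the asserted inequality with $t'=t+\theta$. Choosing $\theta$ so as to realize the boundary maximum and setting $t'=t+\theta$ (reduced mod $2\pi$ into $[0,2\pi]$) then yields $\Phi_t(r)\le\Phi_t(e^{i\theta})$, which is the claim.

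The main obstacle is the one genuinely technical point: justifying that integrating the subharmonic functions $w\mapsto|1-we^{is}|^{2q-2}$ against the positive measure $|\cos(s-t)|^\lambda\,ds$ produces a function that is itself subharmonic (in particular upper semicontinuous) on $\mathbf U$, so that the maximum principle legitimately applies. This is standard—one verifies the sub–mean–value inequality by Fubini, using joint continuity and boundedness of the integrand on $\{|w|\le 1\}$ together with the fact that the zeros $w=e^{-is}$ all lie on the unit circle rather than inside it—but it is the step that must be carried out carefully. Everything else is elementary, and I would emphasize that the two hypotheses enter sharply here: $q\ge 1$ makes the exponent $2q-2$ nonnegative (hence the integrand subharmonic and bounded), while $\lambda\ge 0$ makes the weight a positive measure.
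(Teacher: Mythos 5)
Your proof is correct, and it shares the paper's overall strategy: realize the left-hand side as the value at an interior point of a function of a complex variable on $\overline{\mathbf U}$, show that this function is subharmonic, apply the maximum principle, and identify the boundary values via a rotation of the circle, which turns the angle shift $\theta$ into the shift $t'=t+\theta$ of the parameter $t$. The genuine difference is the ingredient that delivers subharmonicity. The paper invokes Proposition~\ref{lema}, quoted from \cite{anali}, which asserts that $\log\int_A|f(z,\omega)|^b\,d\mu(\omega)$ is subharmonic under the hypothesis \eqref{pavlovic}; it is applied with $f(z,s)=z-e^{is}$, $b=2q-2$, $d\mu=|\cos(s-t)|^\lambda\,ds$, and then combined with Corollary~\ref{copo} and the same rotation identity. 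You instead establish plain subharmonicity of $\Phi_t$ by hand: for each $s$ the map $w\mapsto|1-we^{is}|^{2q-2}$ is subharmonic, being $|F|^p$ with $F$ holomorphic and $p=2q-2\ge0$, and integration against the finite positive measure $|\cos(s-t)|^\lambda\,ds$ preserves the sub-mean-value inequality (Fubini), while continuity on $\overline{\mathbf U}$ follows from dominated convergence since the integrand is bounded there. What each approach buys: yours is completely self-contained and elementary, and plain subharmonicity is all the maximum principle needs; the paper's imports a stronger conclusion (log-subharmonicity of the integral) from an external result, at the cost of verifying its hypothesis (trivial here, one may take $\chi\equiv2$) and of resting on a less elementary fact. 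A small bonus of your version: at the endpoint $q=1$ the exponent is $0$, so $\Phi_t$ is constant and the claim is a trivial equality, whereas Proposition~\ref{lema} formally requires $b>0$; your argument thus covers $q=1$ without any separate remark.
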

\begin{proof} In order to prove Lemma~\ref{polet}, we need the following proposition.
\begin{proposition}\cite[Lemma~3.2]{anali}\label{lema}
Let $\mathbf U\subset \mathbf C$ be the open unit disk and $(A,\mu)$
be a measured space with $\mu(A)<\infty$. Let $f(z,\omega)$ be a
holomorphic function for $z\in \Bbb U$ and measurable for $\omega\in
A$. Let $b>0$ and assume in addition that, there exists an
integrable function $\chi\in L^{\max\{b,2\}}(A,d\mu)$ such that
\begin{equation}\label{pavlovic}|f(0,\omega)|+ |f'(z,\omega)|\le \chi(\omega),\end{equation} for
$(z,\omega)\in \Bbb U \times A$, where by $f'(z,\omega)$ we mean the
complex derivative of $f$ with respect to $z$. Then the function
$$\phi(z) = \log\int_{A} |f(z,\omega)|^ bd\mu(\omega)$$ is
subharmonic in $\Bbb U$.
\end{proposition}

\begin{corollary}\label{copo}
Assume together with the assumptions of the previous proposition
that $z\to f(z,\omega)$ is continuous up to the boundary $\mathbf
T$. Then we have the following inequality
$$\phi(z)\le \max_{\tau \in[0,2\pi)}\phi(e^{i\tau})=\phi(e^{i\tau'}).$$
\end{corollary}
In order to apply Corollary~\ref{copo}, we take
$$d\mu(s)=|\cos(s-t)|^\lambda ds, \ \ \ f(z,s)=z-e^{is} \text{ and
}b=2q-2$$ and observe that \[\begin{split}\max_{\tau}\int_{0}^{2\pi}
|\cos(s&-t)|^\lambda |e^{i\tau}-e^{is}|^{2q-2}ds\\&=\int_{0}^{2\pi}
|\cos(s-t)|^\lambda |e^{i\tau'}-e^{is}|^{2q-2}ds\\&=\int_{0}^{2\pi}
|\cos(s-t')|^\lambda |1-e^{is}|^{2q-2}ds.\end{split}\] This finishes
the proof of Lemma~\ref{polet}.
\end{proof}
The Poisson kernel for the disc can be expressed as
$$
P(z,e^{\theta})=\frac{1-|z|^2}{|z-e^{i\theta}|^2}=-\left(1+
\frac{e^{-i\theta}}{\overline z -
  e^{-i\theta}}+\frac{e^{i\theta}}{z-e^{i\theta}}\right).$$ Then we
have
$$\mathrm{grad}(P) =(P_x,P_y)=P_x+iP_y=2\bar\partial P=\frac{2e^{-i\theta}}{(\overline z -
  e^{-i\theta})^2},$$
$$\partial P =
\frac{e^{i\theta}}{\left({ z-e^{i\theta}}\right)^2}$$ and
$$\bar \partial P =
\frac{e^{-i\theta}}{\left({\bar z-e^{-i\theta}}\right)^2}.$$
\begin{proof}[Proof of Theorem~\ref{condi}] a)
 Let $l=e^{i\tau}$. Then for $p>1$
\begin{equation}\label{cas}\begin{split}D w(z) l &=
\frac{1}{2\pi}\int_0^{2\pi}\left<\mathrm{grad}(P),l\right>f(e^{i\theta})
{d\theta}
\\&=\frac{1}{\pi}\int_0^{2\pi}\Re\frac{e^{-i(\theta+\tau)}}{\left({\bar
z-e^{-i\theta}}\right)^2}f(e^{i\theta})
{d\theta}.\end{split}\end{equation}
\\
We apply \eqref{cas} and H\"older inequality in order to obtain
$$|D w(z) l|\le \frac{1}{\pi}\left(\int_0^{2\pi}|\Re\frac{e^{-i(\theta+\tau)}}{\left({\bar
z-e^{-i\theta}}\right)^2}|^qd\theta\right)^{1/q}\left(\int_0^{2\pi}|f(e^{i\theta})|^p
{d\theta}\right)^{1/p}.$$ We should consider the integral
$$I_q=\int_0^{2\pi}\abs{\Re\frac{e^{-i(\theta+\tau)}}{\left({
\bar z-e^{-i\theta}}\right)^2}}^q d\theta.$$ First of all
$$I_q=\int_0^{2\pi}\abs{\Re\frac{e^{i(\theta+\tau)}}{\left({
z-e^{i\theta}}\right)^2}}^q d\theta
=\int_0^{2\pi}\abs{\Re\frac{e^{i(\theta + \tau-\alpha)}}{\left({
 r-e^{i\theta}}\right)^2}}^q d\theta. $$
Take the substitution
$$e^{i\theta}=
\frac{r-e^{is}}{1- re^{is}}.$$ Then $$de^{i\theta}=\frac{1-r^2}{(1-
re^{is})^2}de^{is},$$ and thus
\[\begin{split}d\theta&=\frac{1-r^2}{(1- re^{is})^2}\frac{e^{is}}{e^{i\theta}}ds\\&=e^{is}\frac{1-r^2}{(1- re^{is})^2}
\frac{1- re^{is}}{ r-e^{is}}ds\\&=\frac{1-r^2}{1+r^2-2r\cos s}ds.
\end{split}\]
On the other hand, we easily find that
$$\Re\frac{e^{i(\theta+\tau-\alpha)}}{\left({
r-e^{i\theta}}\right)^2}=\frac{(1+r^2-2r\cos
s)\cos(s+\tau-\alpha)}{(1-r^2)^2}.$$ Therefore, finally we have the
relation
\begin{equation}\label{ere}\begin{split}\int_0^{2\pi}\abs{\Re\frac{e^{-i(\theta+\tau)}}{\left({\bar
z-e^{-i\theta}}\right)^2}}^q
d\theta&=(1-|z|^2)^{1-2q}\int_{-\pi}^{\pi}\frac{\abs{\cos(s+\tau-\alpha)}^q}{(1+r^2-2r\cos
s)^{1-q}}ds,
\end{split}\end{equation}
which together with first relation give
$$|Dw(z)l|\leq C_p(z,l)(1-|z|^2)^{-1-1/p}\|w\|_{h^p}.$$
Now by using Lemma~\ref{male} we conclude that
\begin{equation*}C_p(z)=\left\{
                 \begin{array}{ll}
                    C_p(z,\mathbf n), & \hbox{if $p<2$;} \\
                    C_p(z,\mathbf t), & \hbox{if $p\ge 2$,}
                 \end{array}
               \right. \end{equation*} which coincides with
               \eqref{po}. This implies \eqref{dd}. Lemma~\ref{male}
               implies at once \eqref{ara}.

%
b) By using the following formula
\begin{equation}\label{formula}\int_0^{\pi} \frac{\sin^{\mu-1}t}{(1+r^2-2
r \cos t)^{\nu}}
dt=\mathrm{B}\left(\frac{\mu}{2},\frac{1}{2}\right)F\left(\nu,\nu
+\frac{1-\mu}{2};\frac{1+\mu}{2},r^2\right)\end{equation} (see,
e.g., Prudnikov, Brychkov and Marichev \cite[2.5.16(43)]{pbm}),
where $\mathrm{B}(u, v)$ is the Beta-function, and $F(a, b; c; x)$
is the hypergeometric Gauss function, for $\mu = q+1$ and $\nu =
1-q$, because $\abs{\cos(s+\tau-\alpha)}^q=\abs{\sin s}^q$, for
$\tau =\alpha+\frac{\pi}{2}$, we obtain \eqref{kre}.

c) By using both Lemma~\ref{polet} and Lemma~\ref{male} we obtain:
$$C_p(z,l)\leq C_p(1,l')\leq C_p$$
for some $l', |l'|=1$ and we have second conclusion of main theorem.

Let us now show that the constant $C_p$ is sharp.  We will show the
sharpness of the result for $p\le 2$. A similar analysis works for
$p>  2$. Let $0<\rho<1$ and take
$$e^{is}=\frac{\rho-e^{it}}{1-\rho e^{it}},$$ i.e.
$$e^{it}=\frac{\rho-e^{is}}{1-\rho e^{is}}.$$ Define
$$f_\rho(e^{it})=(1-\rho^2)^{-1/p}\abs{\cos s (1-\cos s)}^{q-1}\sign(\cos s).$$ And
take $$w_\rho=P[f_\rho].$$ Then
$$dt = \frac{1-\rho^2}{1+\rho^2-2\rho \cos s}ds,$$ $$\Re\frac{e^{it}}{\left({
r-e^{it}}\right)^2}=\frac{(1+r^2-2r\cos s)\cos(s)}{(1-r^2)^2},$$ and
\[\begin{split}\int_{0}^{2\pi}|f_\rho(e^{it})|^pdt &=\int_{0}^{2\pi}|f_\rho(e^{it})|^p
\frac{1}{1+\rho^2-2\rho \cos s}ds\\&=\int_{0}^{2\pi}\abs{\cos s
(1-\cos s)}^{q}\frac{1}{1+\rho^2-2\rho \cos s}ds.\end{split}\] Thus
\begin{equation}\label{ewo}\lim_{\rho \to 1}\|f_\rho\|_p^p= \int_{0}^{2\pi}|f_\rho(e^{it})|^pdt =
\frac{\pi^q}{2^{q}}C_p^q.\end{equation} By taking $r=\rho$, we
obtain
\[\begin{split}(1-\rho^2)^{1+1/p}|D w_\rho(\rho) 1|&=
\frac{(1-\rho^2)^{1+1/p}}{\pi}\int_0^{2\pi}\Re\frac{e^{i t}}{\left({
\rho-e^{i\theta}}\right)^2}f_\rho(e^{it})
{dt}\\&=\frac{(1-\rho^2)^{1+1/p}}{\pi}\int_0^{2\pi}
\frac{(1+\rho^2-2\rho\cos s)\cos(s)} {(1-\rho^2)^2}
(1-\rho^2)^{-1/p}\\&\times\abs{\cos s (1-\cos s)}^{q-1}\sign(\cos s)
\frac{1-\rho^2}{1+\rho^2-2\rho \cos
s}ds\\&=\frac{1}{\pi}\int_{0}^{2\pi}\abs{\cos s}^{q} (1-\cos
s)^{q-1}ds\\&=\frac{\pi^{q-1}}{2^{q-1}}C_p^q\end{split}\] From
\eqref{ewo} it follows that
$$\lim_{\rho\to 1}\frac{(1-\rho^2)^{1+1/p}|D w_\rho(\rho)
1|}{\|f_\rho\|_p}=C_p.$$  This shows that the constant $C_p$ is
sharp.
\end{proof}

\begin{proof}[Proof of Theorem~\ref{two}]
First of all
$$\partial w =
\int_{0}^{2\pi}\frac{e^{i\theta}}{\left({
z-e^{i\theta}}\right)^2}f(e^{i\theta})\frac{d\theta}{2\pi}.$$ By
applying H\"older inequality we have
\[\begin{split}|\partial w|&\le \frac
1{2\pi}\left(\int_{0}^{2\pi}\frac{1}{\left|{
z-e^{i\theta}}\right|^{2q}}d\theta\right)^{1/q}\left(\int_0^{2\pi}|f(e^{i\theta})|^p{dt}\right)^{1/p}\\&=(1-|z|^2)^{1/q-2}\frac
1{2\pi}\left(\int_{0}^{2\pi}\frac{(1-|z|^2)^{2q-1}}{\left|{
z-e^{i\theta}}\right|^{2q}}d\theta\right)^{1/q}\left(\int_0^{2\pi}|f(e^{i\theta})|^p{dt}\right)^{1/p}.\end{split}\]
It remains to estimate the integral
$$J_q= \int_{0}^{2\pi}\frac{(1-|z|^2)^{2q-1}}{\left|{
z-e^{i\theta}}\right|^{2q}}d\theta
=\int_{0}^{2\pi}\frac{(1-r^2)^{2q-1}}{|r-e^{i\theta}|^{2q}}d\theta.$$
By making use again of the change
$$ e^{i\theta}
=\frac{r-e^{is}}{1-re^{is}},$$ we obtain $$d\theta =
\frac{1-r^2}{|1-re^{is}|^2}ds$$ and  $$r-e^{i\theta}
=\frac{(1-r^2)e^{is}}{1-re^{is}}.$$ Therefore by using
Lemma~\ref{polet} for $\lambda=0$ we obtain
\[\begin{split}J_q=\int_{0}^{2\pi}\frac{(1-r^2)^{2q-1}}{|r-e^{i\theta}|^{2q}}d\theta&=(1-r^2)^{1-q}\int_{0}^{2\pi}|1-re^{is}|^{2q-2}ds
\\&=(1-r^2)^{1-q}\int_{0}^{2\pi}|1+r^2-2r\cos s|^{q-1}ds\\&
\leq 2^{q-1}(1-r^2)^{1-q}\int_{0}^{2\pi}|1-\cos
s|^{q-1}ds.\end{split}\] Thus
$$|\partial w|\le c_p (1-|z|^2)^{-1-1/p}\|f\|_{L^{p}(\mathbf T)},$$
where
$$c_p= 2^{\frac{-1 + q}{q}} \pi^{-1 + \frac{1}{2 q}}\left(\frac{\Gamma(-1/2 +
q)}{\Gamma(q)}\right)^{1/q}.$$ This proves \eqref{cipir}. By formula
\eqref{formula} for $\mu =1$, $\nu =1-q$ we have
\[\begin{split}\int_{0}^{2\pi}|1+r^2-2r\cos
s|^{q-1}ds&=2\int_{0}^{\pi}|1+r^2-2r\cos s|^{q-1}ds\\&=2\pi
F\left(1-q,1-q ;1,r^2\right).\end{split}\] This implies \eqref{cip}.
The sharpness of constant $c_p$ can be verified by taking
$$f^{\pm}_\rho(e^{it})=(1-\rho^2)^{-1/p}\abs{\cos s (1-\cos
s)}^{q-1}e^{\pm is}$$ and following the proof of sharpness of $C_p$.
\end{proof}

\subsection{Acknowledgement} After we wrote the first version of
this paper, we had useful discussion about this subject with
professor Vladimir Maz'ya.

\end{document}